\setlist{nosep}
\setlist{noitemsep}
\numberwithin{equation}{section}
\theoremstyle{plain}
\newtheorem{thm}{Theorem}[section]
\newtheorem{cor}[thm]{Corollary}
\newtheorem{lem}[thm]{Lemma}
\newtheorem*{rem}{Remark}
\theoremstyle{definition}
\newtheorem{defn}[thm]{Definition}
\newcommand{\C}{\mathbb{C}}
\newcommand{\Hb}{\mathbb{H}}
\newcommand{\Z}{\mathbb{Z}}
\newcommand{\N}{\mathbb{N}}
\newcommand{\R}{\mathbb{R}}
\newcommand{\SL}{\mathrm{SL}}
\DeclareMathOperator{\re}{Re}
\DeclareMathOperator{\sgn}{sgn}
\newcommand{\vt}[1]{\left\lvert #1 \right\rvert}
\title{Flipping operators and locally harmonic Maass forms}
\author[K. Bringmann]{Kathrin Bringmann}
\address{University of Cologne, Department of Mathematics and Computer Science, Weyertal 86-90, 50931 Cologne, Germany}
\email{kbringma@math.uni-koeln.de}
\author[A. Mono]{Andreas Mono}
\author[L. Rolen]{Larry Rolen}
\address{Department of Mathematics, Vanderbilt University, 1326 Stevenson Center, Nashville, TN 37240, USA}
\email{andreas.mono@vanderbilt.edu}
\email{larry.rolen@vanderbilt.edu}
\date{\today}
\begin{document}

\begin{abstract}
In the theory of integral weight harmonic Maass forms of manageable growth, two key differential operators, the Bol operator and the shadow operator, play a fundamental role. Harmonic Maass forms of manageable growth canonically split into two parts, and each operator controls one of these parts. A third operator, called the flipping operator, exchanges the role of these two parts. Maass--Poincar\'e series (of parabolic type) form a convenient basis of negative weight harmonic Maass forms of manageable growth, and flipping has the effect of negating an index. Recently, there has been much interest in locally harmonic Maass forms defined by the first author, Kane, and Kohnen. These are lifts of Poincar\'e series of hyperbolic type, and are intimately related to the Shimura and Shintani lifts. In this note, we prove that a similar property holds for the flipping operator applied to these Poincar\'e series.
\end{abstract}

\dedicatory{Dedicated to Krishnaswami Alladi, founding Editor-in-Chief of the Ramanujan Journal.}

\subjclass[2020]{11F03, 11F11, 11F25, 11F37}

\keywords{Eisenstein series, flipping operator, harmonic Maass forms, integral binary quadratic forms, modular forms, Poincar{\'e} series}

\maketitle

\section{Introduction and statement of results}
Throughout, we let $k\in\N_{\ge2}$. The \emph{flipping operator} is defined as (see \cite{bkr})
\begin{equation*} %\label{eq:flippingoperatordef}
	\mathfrak F_{2-2k}(f)(\tau) \coloneqq -\frac{v^{2k-2}}{(2k-2)!} \overline{R_{2-2k}^{2k-2}(f)(\tau)},
\end{equation*}
where $\tau=u+iv \in \Hb:=\{\tau \in \mathbb{C}:v>0\}$, the complex upper half-plane, throughout. Here, for $\kappa\in\Z$, the \emph{(iterated) Maass raising operator} is given as ($n \in \N$)
\begin{equation*}
R_{\kappa} \coloneqq 2i\frac d{d\tau} + \frac{\kappa}{v}, \quad R_{\kappa}^n \coloneqq R_{\kappa+2(n-1)} \circ \ldots \circ R_{\kappa}.
\end{equation*}
The goal of this paper is to show that the flipping operator keeps a certain quadratic form Poincar\'e series invariant up to sign.

The flipping operator acts on the weight $2-2k$ Maass--Poincar{\'e} series $\mathcal{P}_{2-2k,m}$ of index $m \in \Z \setminus \{0\}$ defined in \eqref{eq:MPdef} by (see \cite[Theorem 6.11 iv)]{BFOR})
\begin{align*}
\mathfrak F_{2-2k}(\mathcal{P}_{2-2k,m}) = \mathcal{P}_{2-2k,-m}.
\end{align*}
This ``flipping'' of $m$ and $-m$ is actually where the name ``flipping operator'' comes from.
For the Maass--Eisenstein series $\mathcal E_{2k}$ defined in \eqref{eq:MEdef}, we have (see \cite[Theorem 6.15 iv)]{BFOR})
\begin{align} \label{eq:Eisensteinflip}
\mathfrak F_{2-2k}(\mathcal E_{2k}) = -\mathcal E_{2k}.
\end{align}
The flipping operator is closely related to the {\it Bol operator} \cite{bronrh} and the {\it shadow operator} \cite{brufu}, which are respectively given by
	\begin{align*}
		\mathcal{D}^{2k-1} \coloneqq \left(\frac{1}{2\pi i} \frac\partial{\partial\tau} \right)^{2k-1}, \qquad \xi_{2-2k} \coloneqq 2iv^{2-2k} \overline{\frac\partial{\partial\overline{\tau}}}.
	\end{align*}
Let $P_{2k,m}$ and $E_{2k}$ be the usual (weakly) holomorphic Poincar{\'e} and Eisenstein series, respectively, defined in \eqref{eq:Pdef}. Then (see \cite[Theorem 6.11 ii), iii))]{BFOR}, we have
\begin{align*}
\xi_{2-2k} (\mathcal{P}_{2-2k,m}) = -\frac{(4\pi m)^{2k-1}}{(2k-2)!} P_{2k,-m}, \qquad \mathcal{D}^{2k-1} (\mathcal{P}_{2-2k,m}) = m^{2k-1} P_{2k,m}.
\end{align*}
For the Maass--Eisenstein series $\mathcal{E}_{2-2k}$ (see \cite[Theorem 6.15 ii), iii)]{BFOR}), we have
\begin{equation}\label{eq:parabolicimages}
\xi_{2-2k} (\mathcal E_{2-2k}) = (2k-1) E_{2k}, \quad \mathcal{D}^{2k-1} (\mathcal E_{2-2k}) = -(4\pi)^{1-2k}(2k-1)! E_{2k}.
\end{equation}
We refer to \eqref{E:XiFlip} and \eqref{E:DFlip} for the interplay between $\mathfrak{F}_{2-2k}$, $\xi_{2-2k}$, and $\mathcal{D}^{2k-1}$.

The aim of this paper is to show that \eqref{eq:Eisensteinflip} extends to a certain quadratic form Poincar\'e series. To describe this, we let $D \in \N$ be a non-square discriminant, and $\mathcal{Q}_D$ be the set of integral binary quadratic forms $Q(x,y) \coloneqq [a,b,c](x,y) \coloneqq ax^2+bxy+cy^2$ of discriminant $D \coloneqq b^2-4ac$ throughout. Zagier \cite{zagier75} defined
	\begin{equation*} %\label{eq:fkDdef}
		f_{k,D}(\tau) \coloneqq \frac{D^{k-\frac{1}{2}}}{\binom{2k-2}{k-1}\pi} \sum_{Q \in \mathcal Q_{D}} \frac{1}{Q(\tau,1)^k} \in S_{2k},
	\end{equation*}
where $S_{2k}$ denotes the space of weight $2k$ cusp forms for $\mathrm{SL}_2(\Z)$. These functions have very fruitful applications in the theory of modular forms \cite{kohnen85, koza81, koza84}. Katok \cite{katok} showed that one can write $f_{k,D}$ as a hyperbolic Eisenstein series (see \cite{IOS} for more details on hyperbolic expansions), and used this to prove that the $f_{k,D}$ generate $S_{2k}$ as $D$ ranges over positive discriminants. Paralleling the behavior of the parabolic Eisenstein series in \eqref{eq:parabolicimages}, the first author, Kane, and Kohnen \cite{BKK} discovered a weight $2-2k$ preimage $\mathcal F_{1-k,D}$ of $f_{k,D}$ (up to constants) under both the Bol operator and the shadow operator. To introduce it, we define for $Q=[a,b,c] \in \mathcal{Q}_D$
\begin{align*}
Q_{\tau} \coloneqq \frac{1}{v}\left(a\left(u^2+v^2\right)+bu+c\right), \qquad \beta(x;r,s) \coloneqq \int_{0}^{x} t^{r-1} (1-t)^{s-1} dt
\end{align*}
for $x \in [0,1]$ and $r,s > 0$. Then the function $\mathcal F_{1-k,D}$ is given by (see \cite[(1.4)]{BKK})
	\begin{equation} \label{eq:Fcdef}
		\mathcal F_{1-k,D}(\tau) \!\coloneqq\! \frac{D^{\frac{1}{2}-k}}{2\binom{2k-2}{k-1}\pi} \sum_{Q \in \mathcal Q_D} \sgn(Q_{\tau})Q(\tau,1)^{k-1} \beta\!\left(\frac{Dv^2}{\vt{Q(\tau,1)}^2}; k\!-\!\frac{1}{2}, \frac{1}{2}\right)\!.
	\end{equation}
Analogously to $f_{k,D}$, one may view $\mathcal F_{1-k,D}$ as a hyperbolic Eisenstein series of negative weight. Paralleling \eqref{eq:parabolicimages}, it was shown in \cite[Theorem 1.2]{BKK},
	\begin{equation}\label{E:DF}
		\xi_{2-2k}(\mathcal F_{1-k,D}) = D^{\frac12-k} f_{k,D}, \quad \mathcal D^{2k-1}(\mathcal F_{1-k,D}) = -\frac{(2k-2)!}{(4\pi)^{2k-1}} D^{\frac12-k} f_{k,D}
	\end{equation}
outside a certain ``exceptional set'' $E_D \subseteq \Hb$ (see \eqref{eq:EDdef}) of measure\footnote{The set $E_D$ is a union of geodesics associated to $Q \in \mathcal{Q}_D$.} $0$. Thus, it is natural to ask if \eqref{eq:Eisensteinflip} extends from a parabolic to a hyperbolic setting as well. We show that this is indeed the case.
	\begin{thm}\label{T:Main}
		If $D \in \N$ is a non-square discriminant, $k\ge2$, and $\tau \not\in E_D$, then we have
		\begin{equation*}
			\mathfrak F_{2-2k}(\mathcal F_{1-k,D}(\tau)) = -\mathcal F_{1-k,D}(\tau).
		\end{equation*}
	\end{thm}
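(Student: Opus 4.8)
The plan is to identify $\mathfrak F_{2-2k}(\mathcal F_{1-k,D})$ by computing its images under the shadow and Bol operators, for which the interplay relations \eqref{E:XiFlip} and \eqref{E:DFlip} are designed. On the relevant space these take the schematic shape $\xi_{2-2k}\circ\mathfrak F_{2-2k} = c_\xi\,\mathcal D^{2k-1}$ and $\mathcal D^{2k-1}\circ\mathfrak F_{2-2k} = c_{\mathcal D}\,\xi_{2-2k}$ with $c_\xi c_{\mathcal D} = 1$, which is the operator-level expression of the fact that flipping exchanges the two parts controlled by $\xi_{2-2k}$ and $\mathcal D^{2k-1}$. The constants are pinned down by testing against the parabolic Eisenstein series: comparing \eqref{eq:parabolicimages} with \eqref{eq:Eisensteinflip} forces $c_\xi = (4\pi)^{2k-1}/(2k-2)!$ and $c_{\mathcal D} = (2k-2)!/(4\pi)^{2k-1}$.

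First I would feed the known images \eqref{E:DF} into these relations. Setting $g \coloneqq \mathfrak F_{2-2k}(\mathcal F_{1-k,D})$, one finds $\xi_{2-2k}(g) = c_\xi\,\mathcal D^{2k-1}(\mathcal F_{1-k,D}) = -D^{\frac12-k}f_{k,D} = \xi_{2-2k}(-\mathcal F_{1-k,D})$ and, symmetrically, $\mathcal D^{2k-1}(g) = c_{\mathcal D}\,\xi_{2-2k}(\mathcal F_{1-k,D}) = \frac{(2k-2)!}{(4\pi)^{2k-1}}D^{\frac12-k}f_{k,D} = \mathcal D^{2k-1}(-\mathcal F_{1-k,D})$. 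The mechanism is that \eqref{E:DF} realizes both $\xi_{2-2k}(\mathcal F_{1-k,D})$ and $\mathcal D^{2k-1}(\mathcal F_{1-k,D})$ as the single cusp form $f_{k,D}$, in the fixed ratio $-(2k-2)!/(4\pi)^{2k-1}$; this is precisely the ratio for which the flip is self-dual, and it is the same ratio $\mathcal E_{2-2k}$ exhibits in \eqref{eq:parabolicimages}. One subtlety to check is that the complex conjugation built into $\mathfrak F_{2-2k}$ does not produce a stray $\overline{f_{k,D}}$; this is harmless since $f_{k,D}$ has real Fourier coefficients and enters compatibly through $\xi_{2-2k}$ and $\mathcal D^{2k-1}$.

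It then remains to promote the equality of images to an equality of functions. The difference $h \coloneqq g + \mathcal F_{1-k,D}$ satisfies $\xi_{2-2k}(h) = 0$ and $\mathcal D^{2k-1}(h) = 0$ on $\Hb \setminus E_D$: the first condition makes $h$ holomorphic there, and the second then forces $h$ to agree, on each connected component, with a polynomial in $\tau$ of degree at most $2k-2$. Since $D$ is non-square, $\mathcal Q_D$ contains no form $[0,b,c]$, so $E_D$ is a locally finite union of bounded geodesic semicircles. If one knows further that $g$ extends continuously across $E_D$, then, as $\mathcal F_{1-k,D}$ is continuous by \cite{BKK}, the function $h$ is continuous on $\Hb$ and holomorphic off these arcs, hence holomorphic on all of $\Hb$ by removal of singularities. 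A globally holomorphic $h$ killed by $\mathcal D^{2k-1}$ is one polynomial of degree at most $2k-2$; invariance under $\tau\mapsto\tau+1$ (trivial automorphy factor) makes it constant, and invariance under $\tau\mapsto-1/\tau$ with factor $\tau^{2-2k}\neq1$ makes that constant vanish. Hence $h\equiv0$, that is $g = -\mathcal F_{1-k,D}$.

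The main obstacle is exactly this continuity of $g$ across $E_D$, which is delicate because $\mathfrak F_{2-2k}$ applies $2k-2$ raising operators to a function that is only continuous, not smooth, along $E_D$. To handle it I would differentiate \eqref{eq:Fcdef} directly, exploiting the identity $\vt{Q(\tau,1)}^2 = v^2(Q_\tau^2+D)$, which simplifies the argument of the incomplete beta integral to $D/(Q_\tau^2+D)$. Differentiating the beta-term then yields a factor proportional to $\sgn(Q_\tau)(Q_\tau^2+D)^{-k}\,\partial_\tau Q_\tau$, so that every differentiation is controlled by the locus $\{Q_\tau=0\}$ — the geodesic for $Q$ sitting inside $E_D$ — and the emerging $\sgn(Q_\tau)$ pairs against the overall $\sgn(Q_\tau)$ in \eqref{eq:Fcdef}. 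Carrying these factors through all $2k-2$ iterations should show that the apparent discontinuities of $R_{2-2k}^{2k-2}(\mathcal F_{1-k,D})$ across $E_D$ cancel, giving the continuity; executed in full, the same computation produces $g = -\mathcal F_{1-k,D}$ directly and bypasses the uniqueness step altogether. I expect the bookkeeping of these incomplete-beta derivatives and of the $\sgn$-jumps along $E_D$ to be the hardest part.
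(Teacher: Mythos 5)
Your first half is sound, and it coincides with part of the paper's own argument: applying \eqref{E:XiFlip} and \eqref{E:DFlip} (legitimate on $\Hb\setminus E_D$, where $\mathcal F_{1-k,D}$ is harmonic) together with \eqref{E:DF} shows that $h\coloneqq\mathfrak F_{2-2k}(\mathcal F_{1-k,D})+\mathcal F_{1-k,D}$ is annihilated by both $\xi_{2-2k}$ and $\mathcal D^{2k-1}$, hence is a holomorphic polynomial of degree at most $2k-2$ on each connected component of $\Hb\setminus E_D$. The genuine gap is in the step that identifies these component-wise polynomials as zero. Your route hinges on $\mathcal F_{1-k,D}$, and then $g\coloneqq\mathfrak F_{2-2k}(\mathcal F_{1-k,D})$, extending continuously across $E_D$, but this is false: condition (3) of Definition \ref{defn:LHMF} is only a mean-value (averaging) condition, not continuity. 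Indeed, by Lemma \ref{L:LocPoly} the jump of $\mathcal F_{1-k,D}$ across a wall of $E_D$ equals the jump of the local polynomial $P_{\mathcal C}$, which by \eqref{eq:localPolydef} is an explicit nonzero multiple of $Q(\tau,1)^{k-1}$ for the form $Q$ defining that wall; and $Q(\tau,1)$ does not vanish on the geodesic $\{Q_\tau=0\}$, since there $Q(\tau,1)=-2av^2+iv(2au+b)\neq0$. So $\mathcal F_{1-k,D}$ genuinely jumps across $E_D$ (in the nontrivial case of even $k$; for odd $k$ everything vanishes identically), and therefore so does $g$ — granting the theorem, $g=-\mathcal F_{1-k,D}$. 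The continuity you propose to establish in your last paragraph is thus not a property $g$ has. What one would actually need is that the jumps of $g$ and of $\mathcal F_{1-k,D}$ cancel, and that statement is precisely the hard, polynomial part of the theorem, which your sketch (differentiating the incomplete beta terms $2k-2$ times and tracking the $\sgn$-jumps) does not carry out and concedes is the difficult point.

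The paper closes exactly this gap by a different mechanism. Using Lemma \ref{L:LocPoly}, it writes $\mathcal F_{1-k,D}=\mathbb F+P_{\mathcal C}$ on each component, where $\mathbb F$ is globally harmonic, non-modular, and decaying; your operator argument applied to $\mathbb F$ (with decay at $i\infty$ in place of your modularity step, which is unavailable for $\mathbb F$ alone) gives $\mathfrak F_{2-2k}(\mathbb F)=-\mathbb F$. Then $\mathfrak F_{2-2k}(P_{\mathcal C})=-P_{\mathcal C}$ is proved by \emph{direct computation}: via the equivariance \eqref{eq:maassraisingslashing} and the slash identity \eqref{eq:Qslash}, it reduces to $R_{2-2k}^{2k-2}\left(\tau^{k-1}\right)=(2k-2)!\left(\overline\tau/v^2\right)^{k-1}$, which follows from \eqref{E:RD}. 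Some computation of this kind is unavoidable on any route, including yours: both $\xi_{2-2k}$ and $\mathcal D^{2k-1}$ annihilate every polynomial of degree at most $2k-2$, so testing against these operators can never detect how $\mathfrak F_{2-2k}$ acts on the local polynomials — this is exactly the information your proposal is missing.
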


\begin{rem}
In \cite{egkr, mmrw}, the function $\mathcal F_{1-k,D}$ played a crucial role to characterize non-trivial vanishing of twisted central $L$-values. More precisely, the main result of both papers is that a twisted central $L$-value of a newform vanishes if and only if an associated ``local polynomial'' (see \eqref{eq:localPolydef}) is constant. Theorem \ref{T:Main} might help to simplify the methods developed in \cite{egkr, mmrw} by detecting the vanishing of the non-constant part of that local polynomial.
\end{rem}

In \cite{brimo1}, the first two authors introduced and investigated the weight $-2k$ variant
	\begin{equation*} %\label{eq:Gcef}
		\mathcal G_{-k,D}(\tau) \!\coloneqq\! \frac{1}{2} \sum_{Q \in \mathcal Q_D} Q(\tau,1)^{k} \beta\!\left(\frac{Dv^2}{\vt{Q(\tau,1)}^2}; k\!+\!\frac{1}{2}, \frac{1}{2}\right)\!
	\end{equation*}
of $\mathcal{F}_{1-k,D}$, which has continuously but not differentially removable singularities on $E_D$ (see Lemma \ref{lem:brimoresults}). One may view $\mathcal G_{-k,D}$ as an ``even'' analog of $\mathcal F_{1-k,D}$. Hence, it is natural to expect that Theorem \ref{T:Main} extends to $\mathcal G_{-k,D}$. This is indeed the case, as the following corollary shows.
\begin{cor} \label{cor:Maincor}
If $D \in \N$ is a non-square discriminant, $k\ge2$, and $\tau \not\in E_D$, then we have
\begin{align*}
\mathfrak F_{-2k}(\mathcal G_{-k,D}(\tau)) = -\mathcal G_{-k,D}(\tau).
\end{align*}
\end{cor}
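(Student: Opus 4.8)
The plan is to mirror the proof of Theorem \ref{T:Main}, replacing the input \eqref{E:DF} by the corresponding images of $\mathcal G_{-k,D}$ under the shadow and Bol operators, and working in weight $-2k$ rather than $2-2k$. Concretely, I would set $g \coloneqq \mathfrak F_{-2k}(\mathcal G_{-k,D}) + \mathcal G_{-k,D}$ and aim to show that $g$ vanishes identically on $\Hb\setminus E_D$. Since $\mathfrak F_{-2k}$ acts on weight $-2k$ forms via $\mathfrak F_{-2k}(f) = -\frac{v^{2k}}{(2k)!}\overline{R_{-2k}^{2k}(f)}$, the two natural handles on $g$ are its images under $\xi_{-2k}$ and $\mathcal D^{2k+1}$, and the strategy is to show both vanish.

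First I would record the weight $-2k$ analogues of \eqref{E:XiFlip} and \eqref{E:DFlip}, which follow from the same manipulation as in weight $2-2k$: a direct computation gives $\xi_{-2k}(\mathfrak F_{-2k}(f)) = -\frac{1}{(2k)!}R_{-2k}^{2k+1}(f)$, and Bol's identity $R_{-2k}^{2k+1} = (-4\pi)^{2k+1}\mathcal D^{2k+1}$ then yields
\[
\xi_{-2k}\circ\mathfrak F_{-2k} = \frac{(4\pi)^{2k+1}}{(2k)!}\,\mathcal D^{2k+1}, \qquad \mathcal D^{2k+1}\circ\mathfrak F_{-2k} = \frac{(2k)!}{(4\pi)^{2k+1}}\,\xi_{-2k},
\]
the second identity being obtained analogously. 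The key structural input is then Lemma \ref{lem:brimoresults}: paralleling \eqref{E:DF}, it should express $\xi_{-2k}(\mathcal G_{-k,D})$ and $\mathcal D^{2k+1}(\mathcal G_{-k,D})$ as proportional to the same weight $2k+2$ cusp form, with ratio exactly $\mathcal D^{2k+1}(\mathcal G_{-k,D}) = -\frac{(2k)!}{(4\pi)^{2k+1}}\xi_{-2k}(\mathcal G_{-k,D})$. This ``self-duality'' of the two images is the hyperbolic-Eisenstein feature that forces the flip to act by $-1$, exactly as for $\mathcal E_{2-2k}$ in \eqref{eq:Eisensteinflip}; it fails for generic Poincar\'e series, where the flip permutes rather than negates an index. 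Feeding this proportionality into the two intertwining relations immediately gives $\xi_{-2k}(g) = 0$ and $\mathcal D^{2k+1}(g) = 0$ on $\Hb\setminus E_D$.

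It remains to deduce $g \equiv 0$ from these vanishing statements, and this is the step I expect to be the main obstacle, since $\xi_{-2k}(g)=0$ and $\mathcal D^{2k+1}(g)=0$ only force $g$ to be, on each connected component of $\Hb\setminus E_D$, a holomorphic polynomial in $\tau$ of degree at most $2k$, i.e. a weight $-2k$ local polynomial, and such polynomials can be nonzero in general. To kill them I would use that $\mathcal G_{-k,D}$ is continuous on all of $\Hb$ (its singularities along $E_D$ being continuously removable, by Lemma \ref{lem:brimoresults}): if $g$ is likewise continuous across $E_D$, then $g$ is holomorphic off the measure-zero, locally finite union of geodesics $E_D$ and continuous across it, hence holomorphic on $\Hb$ by a removable-singularity argument, and a degree-$\le 2k$ polynomial that is modular of weight $-2k$ for $\SL_2(\Z)$ must vanish (invariance under $\tau\mapsto\tau+1$ forces it constant, and the $\tau\mapsto-1/\tau$ transformation then kills the constant). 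The delicate point is precisely the continuity of $g$ across $E_D$: because $\mathfrak F_{-2k}$ applies $2k$ raising operators while $\mathcal G_{-k,D}$ is only continuously, not differentiably, removable along $E_D$, the term $\mathfrak F_{-2k}(\mathcal G_{-k,D})$ a priori carries singular contributions on $E_D$ that must cancel against $\mathcal G_{-k,D}$ in $g$. I would establish this by computing the leading singular behaviour of $R_{-2k}^{2k}(\mathcal G_{-k,D})$ transverse to each geodesic in $E_D$, governed by the $\beta$-factor near its endpoint $x=1$, where $1-x = v^2 Q_\tau^2/\vt{Q(\tau,1)}^2$ vanishes, and matching it to the jump of $-\mathcal G_{-k,D}$; alternatively one pins down the local polynomial on the cusp-adjacent component by a Fourier/growth expansion near $i\infty$ and propagates the vanishing to the remaining components via modularity.
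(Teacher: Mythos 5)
Your first half is sound and coincides exactly with the paper's argument: the weight $-2k$ intertwining relations $\xi_{-2k}\circ\mathfrak F_{-2k} = \frac{(4\pi)^{2k+1}}{(2k)!}\mathcal D^{2k+1}$ and $\mathcal D^{2k+1}\circ\mathfrak F_{-2k} = \frac{(2k)!}{(4\pi)^{2k+1}}\xi_{-2k}$, together with the proportionality $\mathcal D^{2k+1}(\mathcal G_{-k,D}) = -\frac{(2k)!}{(4\pi)^{2k+1}}\xi_{-2k}(\mathcal G_{-k,D})$ from Lemma \ref{lem:brimoresults} (3), do give $\xi_{-2k}(g)=\mathcal D^{2k+1}(g)=0$ on $\Hb\setminus E_D$. (One caveat: only the first intertwining identity is a formal consequence of Bol's identity; the second fails for non-harmonic input, e.g. in weight $-2$ for $f(\tau)=v$, so you must invoke harmonicity of $\mathcal G_{-k,D}$ off $E_D$ -- available here, but not "analogous.") The genuine gap is the globalization step, which you correctly identify as the crux but do not close, and neither of your proposed repairs works as sketched. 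Repair (i) is mis-aimed: $\mathcal G_{-k,D}$ has \emph{no} jump across $E_D$ -- its singularities are continuously removable by Lemma \ref{lem:brimoresults} (2) -- so there is nothing for the singular part of $\mathfrak F_{-2k}(\mathcal G_{-k,D})$ to cancel against. The entire burden is to show that $v^{2k}\overline{R_{-2k}^{2k}(\mathcal G_{-k,D})}$, built from $2k$ derivatives of a function that is not even once differentiable across $E_D$, itself extends continuously; proving that directly is essentially equivalent to the corollary. Repair (ii) fails outright: $\Gamma$ does not act transitively on the connected components of $\Hb\setminus E_D$ (the bounded components under the geodesics are not translates of the cusp-adjacent one), so vanishing near $i\infty$ propagates only along one $\Gamma$-orbit of components. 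This is precisely why the local polynomial $P_{\mathcal C}$ in \eqref{eq:localPolydef} genuinely depends on $\mathcal C$: it is constant on the cusp-adjacent component but not on others, so "modularity kills local polynomials" is false in this setting.

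The idea you are missing is to use Lemma \ref{lem:brimoresults} (3) as a \emph{splitting}, applied before the flip, rather than only for its consequences on the operator images. Off $E_D$ one has $\mathcal G_{-k,D} = \pi D^{k+\frac12}c_\infty(k+1) + \mathbb G$ with $\mathbb G \coloneqq D^{k+\frac12} g_{k+1,D}^{*} - \frac{D^{k+\frac12}(2k)!}{(4\pi)^{2k+1}}\mathcal E_{g_{k+1,D}}$, and the crucial structural fact is that the "local polynomial" of $\mathcal G_{-k,D}$ is one and the same constant on \emph{every} component, so no matching across $E_D$ is ever required. The paper then runs your operator argument on $\mathbb G$: since $g_{k+1,D}$ decays like a cusp form towards $i\infty$ (by \cite{mo1} or \cite{brimo1}), both terms of $\mathbb G$ decay, so the function $\mathfrak F_{-2k}(\mathbb G)+\mathbb G$, being annihilated by $\xi_{-2k}$ and $\mathcal D^{2k+1}$ and decaying, vanishes; the constant is disposed of by $\mathfrak F_{-2k}(1)=-1$, i.e. \eqref{eq:claimPart1} with $k\mapsto k+1$, already proved in Theorem \ref{T:Main}. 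So your scheme can be completed, but only by importing this splitting, which is exactly the paper's (much shorter) proof.
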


The paper is organized as follows. In Section \ref{Prelim} we review the necessary preliminaries on modular forms, harmonic Maass forms, and locally harmonic Maass forms. Section~\ref{ProofMain} is devoted to the proofs of Theorem \ref{T:Main} and of Corollary \ref{cor:Maincor}.

\section*{Acknowledgments}
We thank the referee for helpful comments. The first author was funded by the European Research Council (ERC) under the European Union’s Horizon 2020 research and innovation programme (grant agreement No. 101001179). The second author received support for this research by an AMS--Simons Travel Grant. The third author's work was supported by a grant from the Simons Foundation (853830, LR).

\section{Preliminaries}\label{Prelim}
We summarize some standard notions from the theory of holomorphic modular forms and harmonic Maass forms. We refer to \cite{BFOR} for more details on such forms. More details on locally harmonic Maass forms can be found in \cite{BKK}.

\subsection{Poincar{\'e} series}\label{Poincare}
For a function $f\colon \mathbb H\rightarrow\C$, $\kappa\in\Z$, and a matrix $\begin{psmallmatrix} a & b \\ c & d\end{psmallmatrix}\in \Gamma \coloneqq \SL_2(\Z)$, define the \emph{Petersson slash operator}
\begin{align*}
f\big\vert_{\kappa}\begin{pmatrix} a & b \\ c & d\end{pmatrix}(\tau) \coloneqq (c\tau+d)^{-\kappa} f\left(\frac{a\tau+b}{c\tau+d}\right).
\end{align*}
For  completeness, we define the variants of modular forms we need.
\begin{defn}
Let $f \colon \Hb \to \C$ be a function and $\kappa \in \Z$.
\begin{enumerate}[label=\rm(\arabic*),leftmargin=*]
\item We say $f$ is a {\it holomorphic modular form} of weight $\kappa$ if the following hold:
\begin{enumerate}[label=\rm(\roman*),leftmargin=0.8cm]
\item For $\gamma \in \Gamma$ we have $f\vert_{\kappa}\gamma = f$,
\item $f$ is holomorphic on $\Hb$,
\item $f$ is holomorphic at $i\infty$.
\end{enumerate}
We denote the vector space of functions satisfying these conditions by $M_{\kappa}$.
\item If in addition $f$ vanishes at $i\infty$, then we call $f$ a \textit{cusp form}. The space of cusp forms is denoted by $S_{\kappa}$.
\item If $f$ satisfies the conditions (i) and (ii) from (1) and is permitted to have a pole at $i\infty$, then we call $f$ a {\it weakly holomorphic modular form} of weight ${\kappa}$. The vector space of such functions is denoted by $M_{\kappa}^!$.
\end{enumerate}
\end{defn}

To define examples, for $k\in\N_{\ge2}$ and $m \in \Z$, set
\begin{align} \label{eq:Pdef}
P_{2k,m} \coloneqq \sum_{\gamma \in  \Gamma_{\infty} \backslash \Gamma} \varphi_m \big\vert_{2k}\gamma, \qquad E_{2k} \coloneqq P_{2k,0},
\end{align}
where $\varphi_m(\tau)\coloneqq q^m$ with $q \coloneqq e^{2\pi i \tau}$ and $\Gamma_{\infty} \coloneqq \left\{\pm\begin{psmallmatrix} 1 & n \\ 0 & 1 \end{psmallmatrix} \colon n \in \Z \right\}$. We have
\begin{align*}
P_{2k,m} \in \begin{cases}
S_{2k} &\text{if } m > 0, \\
M_{2k} &\text{if } m = 0, \\
M_{2k}^! &\text{if } m < 0,
\end{cases}
\end{align*}
and $P_{2k,m}$ spans $S_{2k}$ resp. $M_{2k}^!$ for $m \neq 0$ (see \cite[Theorems 6.8 and 6.9]{BFOR}).

In negative weights, we require the following non-holomorphic modular forms.
\begin{defn}
Let $f \colon \Hb \to \C$ be a smooth function.
\begin{enumerate}[label=\rm(\arabic*),leftmargin=*]
\item We say $f$ is a weight $2-2k$ {\it harmonic Maass form} if the following hold:
\begin{enumerate}[label=\rm(\roman*),leftmargin=0.8cm]
	\item For $\gamma \in \Gamma$ we have $f\vert_{2-2k}\gamma = f$.
	\item We have
	$
		\Delta_{2-2k}(f) = 0,
	$
	where the weight $\kappa$ {\it hyperbolic Laplacian} is defined as
	\begin{align*}
	\Delta_{2-2k} \coloneqq -v^2\left(\frac{\partial^2}{\partial u^2}+\frac{\partial^2}{\partial v^2}\right) + i(2-2k) v\left(\frac{\partial}{\partial u} + i\frac{\partial}{\partial v}\right).
	\end{align*}
	\item There exists a polynomial $P_f(\tau)\in\C[q^{-1}]$, the \emph{principal part of $f$} such that, as $v \to \infty$,
\begin{align*}
f(\tau) - P_f(\tau) = O\left(e^{-\varepsilon v}\right)
\end{align*}
for some $\varepsilon > 0$.
\end{enumerate}
We denote the vector space of such functions by $H_{\kappa}$.
\item If $f$ satisfies the conditions (i) and (ii) from (1) as well as
\begin{align*}
f(\tau) = O\left(e^{\varepsilon v}\right) \ \text{as} \ v \to \infty \ \text{for some} \ \varepsilon > 0,
\end{align*}
then we call $f$ a {\it harmonic Maass form of manageable growth}. We denote the vector space of such forms by $H_{\kappa}^!$.
\end{enumerate}
\end{defn}

Let $M_{\mu,\nu}$ be the $M$-Whittaker function (see \cite[Subsection 13.14]{nist} for example) and define the \emph{Maass--Poincar{\'e} series} (see \cite[Definition 6.10]{BFOR})
\begin{align} \label{eq:MPdef}
\mathcal{P}_{2-2k,m} \coloneqq \sum_{\gamma \in \Gamma_{\infty} \backslash \Gamma} \phi_{2-2k,m}\big\vert_{2-2k} \gamma, \qquad m \in \Z \setminus \{0\},
\end{align}
where
\begin{align*}
\phi_{2-2k,m}(\tau) \coloneqq
\frac{(-\sgn(m))^{2k-1} (4 \pi |m| v)^{k-1}}{(2k-1)!} M_{\sgn(m)(1-k),k-\frac12}(4 \pi |m| v) e^{2 \pi i m u}.
\end{align*}
The {\it Maass--Eisenstein series} is given by (see \cite[p.\ 104]{BFOR})
\begin{align} \label{eq:MEdef}
\mathcal E_{2-2k} \coloneqq \sum_{\gamma \in \Gamma_{\infty} \backslash \Gamma} v^{2k-1}\big\vert_{2-2k} \gamma.
\end{align}
We have $\mathcal E_{2-2k} \in H_{2-2k}^!$ and
\begin{align*}
\mathcal{P}_{2-2k,m} \in \begin{cases}
H_{2-2k}^! &\text{if } m > 0, \\
H_{2-2k} &\text{if } m < 0.
\end{cases}
\end{align*}
By virtue of their Fourier expansion (see \cite[Theorem 6.11 v)]{BFOR}), $\mathcal{P}_{2-2k,m}$ has a prescribed principal part for $m < 0$ resp.\ a prescribed non-holomorphic part for $m > 0$ (see \eqref{eq:fouriershape} resp.\ \eqref{eq:fouriershape2} below). For this reason, $\mathcal{P}_{2-2k,m}$ span $H_{2-2k}$ (resp.\ $H_{2-2k}^!$ if restricting to $m > 0$). We refer the reader to \cite[Theorems 6.11 and 6.15]{BFOR} for proofs including \eqref{eq:Eisensteinflip}, \eqref{eq:parabolicimages} and more details.

\subsection{Differential operators and Fourier expansions}\label{FourierDifferential}
Let $\gamma \in \SL_2(\R)$ and $f \colon \Hb \to \C$ be smooth. Then, the Maass raising operator satisfies (see \cite[Lemma 2.1.1]{B})
\begin{equation} \label{eq:maassraisingslashing}
R_{\kappa} \left(f \vert_{\kappa} \gamma\right) = R_{\kappa}(f) \vert_{\kappa+2}\gamma.
\end{equation}
Moreover, if $f$ is an eigenfunction of $\Delta_{\kappa}$ with eigenvalue $\lambda$, then $R_{\kappa}(f)$ is an eigenfunction of $\Delta_{\kappa+2}$ with eigenvalue $\lambda+\kappa$, see \cite[Lemma 5.2]{BFOR}. Furthermore, according to \cite[(56)]{123}, the iterated Maass raising operator and the Bol operator are related by
	\begin{equation}\label{E:RD}
		R_{2-2k}^n = \sum_{r=0}^n (-1)^r \binom nr (2-2k+r)_{n-r} v^{r-n} (4\pi)^r \mathcal D^r,
	\end{equation}
	where the {\it rising factorial} is defined as
\begin{align*}
(a)_n\coloneqq a(a+1)\cdots(a+n-1), \qquad n \in \N_0.
\end{align*}

Bruinier and Funke \cite{brufu} showed that the Fourier expansion of a harmonic Maass form of manageable growth\footnote{The reader should be aware that their terminology refers to our harmonic Maass forms of manageable growth as ``\textit{weak} Maass forms.''} $f \in H_{2-2k}^{!}$ naturally splits into a holomorphic part and a non-holomorphic part. Namely (see \cite[Lemma 4.3]{BFOR}), we have a Fourier expansion of the shape
\begin{align} \label{eq:fouriershape}
f(\tau) = \sum_{n \gg -\infty} c_f^+(n) q^n + c_f^-(0)v^{2k-1} + \sum_{\substack{n \ll \infty \\ n \neq 0}} c_f^-(n)\Gamma(2k-1,-4\pi nv)q^n.
\end{align}
Here, the non-holomorphic part involves the {\it incomplete Gamma function}
\begin{align*}
\Gamma(s,x) \coloneqq \int_x^{\infty} t^{s-1}e^{-t} dt,
\end{align*}
defined for $\re(s) > 0$ and $x \in \R$. In particular, if $f \in H_{2-2k}$, then $f$ has a Fourier expansion of the shape
\begin{align} \label{eq:fouriershape2}
f(\tau) = \sum_{n \gg -\infty} c_f^+(n) q^n + \sum_{n < 0} c_f^-(n)\Gamma(2k-1,-4\pi nv)q^n.
\end{align}
By \cite[Proposition 5.15 iii), iv)]{BFOR}, the flipping operator satisfies
	\begin{align}
		\xi_{2-2k}(\mathfrak F_{2-2k}(f)) &= \frac{(4\pi)^{2k-1}}{(2k-2)!} \mathcal D^{2k-1}(f),\label{E:XiFlip}\\
		\mathcal D^{2k-1}(\mathfrak F_{2-2k}(f)) &= \frac{(2k-2)!}{(4\pi)^{2k-1}} \xi_{2-2k}(f). \label{E:DFlip}
	\end{align}
In other words, the flipping operator ``switches'' the holomorphic and non-holomorphic part in the Fourier expansion of a harmonic Maass form of manageable growth.

\subsection{Locally harmonic Maass forms}\label{FkDProp}
Let $D \in \N$ be a non-square discriminant and define
\begin{align} \label{eq:EDdef}
E_D \coloneqq \bigcup_{Q \in \mathcal{Q}_D} \left\{\tau \in \Hb \colon Q_{\tau} = 0 \right\}.
\end{align}
We recall the following definition from \cite[Section 2]{BKK}.
\begin{defn} \label{defn:LHMF}
	A function $f\colon \Hb \to\C$ is called a \emph{locally harmonic Maass form of weight $2-2k$} with exceptional set $E_D$, if it satisfies the following conditions:
	\begin{enumerate}[leftmargin=*,label=(\arabic*)]
		\item We have $f\vert_{2-2k}\gamma=f$ for every $\gamma \in \Gamma$.

		\item For all $\tau\in\Hb\setminus E_D$, there exists a neighborhood of $\tau$, in which $f$ is real-analytic and $\Delta_{2-2k}(f)(\tau)=0$.

		\item For every $\tau\in E_D$, we have that
		\[
			f(\tau) = \frac12\lim_{\varepsilon\to0^+} (f(\tau+i\varepsilon)+f(\tau-i\varepsilon)).
		\]

		\item The function $f$ exhibits at most polynomial growth towards $i\infty$.
	\end{enumerate}
\end{defn}

According to \cite[Theorem 1.1]{BKK}, $\mathcal{F}_{1-k,D}$ from \eqref{eq:Fcdef} is such a locally harmonic Maass form with exceptional set $E_D$. A key property of $\mathcal F_{1-k,D}$ is that it admits a certain splitting, which might be viewed as a generalization of the Fourier expansion \eqref{eq:fouriershape}. To describe this, we define the \emph{holomorphic} and \emph{non-holomorphic Eichler integrals} \cite[Section 2]{eichler} of a cusp form $f(\tau)=\sum_{n\ge1} c_f(n) q^n\in S_{2k}$ by
\begin{align*}
	\mathcal E_f(\tau) \coloneqq \sum_{n\ge1} \frac{c_f(n)}{n^{2k-1}}q^n, \qquad
	f^*(\tau) \coloneqq (2i)^{1-2k}\!\int_{-\overline\tau}^{i\infty} f^c(z)(z+\tau)^{2k-2} dz,
	\end{align*}
where $f^c(\tau)\coloneqq\overline{f(-\overline \tau)}$. We have
\begin{equation*}%\label{eq:Eichlerdiff}
\xi_{2-2k} \left(f^{*}\right) = f, \quad \mathcal{D}^{2k-1} \left(f^{*}\right) = 0, \quad \xi_{2-2k} \left(\mathcal{E}_{f}\right) = 0,  \quad \mathcal{D}^{2k-1} \left(\mathcal{E}_{f}\right) = f.
\end{equation*}
Note that both $\mathcal E_f$ and $f^*$ are real-analytic on $\Hb$.

\begin{lem}[\protect{\cite[Theorem 7.1]{BKK}}]\label{L:LocPoly}
Let $\mathcal{C} \subseteq \Hb \setminus E_D$ be a connected component and $\tau\in\mathcal C$. Let
\begin{equation*} %\label{eq:cinftydef}
c_\infty(k) \coloneqq \frac{1}{2^{2k-2}(2k-1)} \sum_{a\ge1} \sum_{\substack{0\le b<2a\\b^2\equiv D\pmod{4a}}} \frac{1}{a^{k}},
\end{equation*}
and define the local polynomial by
\begin{equation} \label{eq:localPolydef}
P_{\mathcal{C}}(\tau) \coloneqq -\frac{c_\infty(k)}{\binom{2k-2}{k-1}} + (-1)^k 2^{3-2k} D^{\frac12-k} \sum_{\substack{Q=[a,b,c]\in\mathcal Q_D\\a<0<Q_\tau}} Q(\tau,1)^{k-1}.
\end{equation}
Then, we have
	\begin{equation*}%\label{eq:splitlocal}
		\mathcal F_{1-k,D}(\tau) = D^{\frac12-k} f_{k,D}^*(\tau) - D^{\frac12-k} \frac{(2k-2)!}{(4\pi)^{2k-1}} \mathcal E_{f_{k,D}}(\tau) + P_{\mathcal{C}}(\tau).
	\end{equation*}
\end{lem}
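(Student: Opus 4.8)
The plan is to pin down $\mathcal{F}_{1-k,D}$ by combining the two differential-operator identities in \eqref{E:DF} with an explicit extraction of its polynomial part, one connected component at a time. Fix a connected component $\mathcal{C}$ of $\Hb\setminus E_D$ and set
\begin{equation*}
G \coloneqq \mathcal F_{1-k,D} - D^{\frac12-k} f_{k,D}^* + D^{\frac12-k}\frac{(2k-2)!}{(4\pi)^{2k-1}}\mathcal E_{f_{k,D}},
\end{equation*}
the difference between $\mathcal F_{1-k,D}$ and the proposed Eichler-integral part. On $\mathcal{C}$ the form $\mathcal F_{1-k,D}$ is real-analytic and harmonic, so $G$ is smooth there. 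Using $\xi_{2-2k}(f^*)=f$, $\xi_{2-2k}(\mathcal E_f)=0$ together with $\xi_{2-2k}(\mathcal F_{1-k,D})=D^{\frac12-k}f_{k,D}$ from \eqref{E:DF}, I would first verify $\xi_{2-2k}(G)=0$; likewise, using $\mathcal D^{2k-1}(f^*)=0$, $\mathcal D^{2k-1}(\mathcal E_f)=f$ and the second identity in \eqref{E:DF}, I would verify $\mathcal D^{2k-1}(G)=0$. Since $\xi_{2-2k}(G)=0$ forces $G$ to be holomorphic on $\mathcal C$, and $\mathcal D^{2k-1}(G)=0$ then forces $\partial_\tau^{2k-1}G=0$, the function $G$ restricted to $\mathcal C$ is a holomorphic polynomial in $\tau$ of degree at most $2k-2$; call it $P_{\mathcal C}$. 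It remains to identify $P_{\mathcal C}$ with the right-hand side of \eqref{eq:localPolydef}.

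For this identification I would extract the ``holomorphic polynomial part'' of $\mathcal F_{1-k,D}$ directly from its defining series \eqref{eq:Fcdef}. The starting point is the elementary identity $\frac{Dv^2}{\vt{Q(\tau,1)}^2} = 1 - \frac{v^2 Q_\tau^2}{\vt{Q(\tau,1)}^2}$, which follows from $D=b^2-4ac$ by a short computation, together with the beta splitting $\beta(1-s;k-\frac12,\frac12) = B(k-\frac12,\frac12) - \beta(s;\frac12,k-\frac12)$. Substituting these into \eqref{eq:Fcdef} writes $\mathcal F_{1-k,D}$ as a ``complete-beta'' contribution proportional to $\sum_{Q}\sgn(Q_\tau)Q(\tau,1)^{k-1}$ plus a ``tail'' contribution built from $\beta(\frac{v^2Q_\tau^2}{\vt{Q(\tau,1)}^2};\frac12,k-\frac12)$. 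The tail terms vanish on the geodesics $Q_\tau=0$ and, after summing over $Q$, should reassemble into $D^{\frac12-k}f_{k,D}^*$ and the $\mathcal E_{f_{k,D}}$-part, while the complete-beta contribution is the polynomial. Here the Legendre duplication formula gives the clean normalization $\frac{B(k-\frac12,\frac12)}{\binom{2k-2}{k-1}\pi}=2^{2-2k}$, which is what converts the prefactor of \eqref{eq:Fcdef} into the power of $2$ appearing in \eqref{eq:localPolydef}.

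To pin down $P_{\mathcal C}$ exactly I would determine it by its jumps and by one boundary value. Since $f_{k,D}^*$ and $\mathcal E_{f_{k,D}}$ are real-analytic across every geodesic in $E_D$, the jump of $P_{\mathcal C}$ as $\tau$ crosses a wall $\{Q_{0,\tau}=0\}$ equals that of $\mathcal F_{1-k,D}$, which comes solely from the sign flip of the finitely many terms whose geodesic passes through the crossing point; computing this jump should reproduce the change in the finite sum $\sum_{a<0<Q_\tau}Q(\tau,1)^{k-1}$ (finite because $a<0<Q_\tau$ confines $Q$ to a compact set). To fix the additive constant I would evaluate on the component $\mathcal C_\infty$ adjacent to $i\infty$: since $D$ is a non-square, every $Q\in\mathcal Q_D$ has $a\neq0$, and for $v$ large $\sgn(Q_\tau)=\sgn(a)$, so the finite sum is empty there; computing the $v\to\infty$ asymptotic constant term of $\mathcal F_{1-k,D}$ (against the vanishing constant terms of the Eichler integrals) should then produce exactly $-c_\infty(k)/\binom{2k-2}{k-1}$.

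I expect the main obstacle to be the explicit identification in these last two steps, and in particular the regularization of the series $\sum_{Q\in\mathcal Q_D}\sgn(Q_\tau)Q(\tau,1)^{k-1}$, which is only conditionally convergent: one must organize the $Q$-sum (for instance by the sign of $a$ and by $\vt{a}$) so that the non-holomorphic tails cancel the divergent parts and the finite polynomial survives. The appearance of the arithmetic constant $c_\infty(k)$, with its congruence condition $b^2\equiv D\pmod{4a}$, should emerge from this cusp computation as a count of the relevant representatives $Q$, and controlling this asymptotic rigorously is the delicate point.
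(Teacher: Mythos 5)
This lemma is not proved in the paper at all: it is quoted from \cite[Theorem 7.1]{BKK}, so the only meaningful comparison is with the original argument there, and your outline does in fact reproduce its architecture. The first step is sound and matches \cite{BKK}: by \eqref{E:DF} (which is \cite[Theorem 1.2]{BKK}, proved independently by differentiating the defining series termwise, so there is no circularity) together with $\xi_{2-2k}(f^*)=f$, $\mathcal D^{2k-1}(f^*)=0$, $\xi_{2-2k}(\mathcal E_f)=0$, $\mathcal D^{2k-1}(\mathcal E_f)=f$, your function $G$ is annihilated by both $\xi_{2-2k}$ and $\mathcal D^{2k-1}$ on each component, hence is a holomorphic polynomial of degree at most $2k-2$ there. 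Your bookkeeping is also correct: the identity $\frac{Dv^2}{\vt{Q(\tau,1)}^2}=1-\frac{v^2Q_\tau^2}{\vt{Q(\tau,1)}^2}$ holds, the splitting $\beta(x;r,s)=B(r,s)-\beta(1-x;s,r)$ is the right mechanism, $\frac{B\left(k-\frac12,\frac12\right)}{\binom{2k-2}{k-1}\pi}=2^{2-2k}$ follows from duplication, the set $\{Q\in\mathcal Q_D: a<0<Q_\tau\}$ is indeed finite for fixed $\tau$, and the pairing $Q\leftrightarrow -Q$ (with the restriction $a<0$ selecting one representative) accounts for the factor $2^{3-2k}$ in \eqref{eq:localPolydef}. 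The genuine gap is the one you flag yourself: the evaluation of the constant term at the cusp, i.e., showing that on the component containing all $\tau$ with $v$ large (which exists, since every geodesic $Q_\tau=0$ has apex height at most $\frac{\sqrt D}{2}$) the difference $G$ equals $-c_\infty(k)/\binom{2k-2}{k-1}$. In \cite{BKK} this is a substantial Fourier-expansion computation — splitting the $Q$-sum by $a$ and by $b\pmod{2a}$ (whence the congruence $b^2\equiv D\pmod{4a}$), applying Poisson summation, and evaluating the resulting special-function integrals — and it cannot be finessed, precisely because, as you note, the ``complete-beta'' piece $\sum_Q \sgn(Q_\tau)Q(\tau,1)^{k-1}$ diverges termwise and only the combination with the incomplete tails is summable. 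As written, your proposal is a correct reduction of the lemma to this computation, not a proof of it.

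Two smaller remarks. First, your middle paragraph (reassembling the beta tails into $f_{k,D}^*$ and $\mathcal E_{f_{k,D}}$) is redundant once the operator argument has shown that $G$ is locally a polynomial: you then only need the cusp constant and the wall-crossing jumps, which is exactly how the identification is organized in \cite{BKK}. Second, to propagate the identification from the cuspidal component to an arbitrary component $\mathcal C$, you should add the (easy but necessary) observation that $\mathcal C$ can be joined to the cuspidal component by a path crossing only finitely many geodesics in $E_D$, transversally and one at a time, so that the jump computation determines $P_{\mathcal C}$ inductively.
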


\begin{rem}
The constant $c_\infty(k)$ can be evaluated using \cite[Proposition 3]{zagier77}.
\end{rem}

Note that the splitting in Lemma \ref{L:LocPoly} resembles the Fourier expansion \eqref{eq:fouriershape} of a harmonic Maass form of manageable growth with $c_f^-(0)v^{k-1}$ replaced by the local polynomial $P_{\mathcal{C}}$.

Inspecting a modular integral introduced by Parson \cite{parson} in $1993$, the second author \cite{mo1} introduced the function
\begin{align*}
g_{k+1,D}(\tau) \coloneqq \sum_{Q \in \mathcal{Q}} \frac{\sgn\left(Q_{\tau}\right)}{Q(\tau,1)^{k+1}}, \qquad \tau \not\in E_D,
\end{align*}
which can be thought of as an ``odd'' local variant of $f_{k,D}$ as well as a positive weight analog of $\mathcal{F}_{1-k,D}$, because it exhibits singularities on the set $E_D$ too. The function $g_{k+1,D}$ satisfies the conditions from Definition \ref{defn:LHMF} by \cite[Theorem 1.1]{mo1}. Moreover, this function appeared in \cite{brimo1} while investigating certain functions introduced by Knopp \cite{knopp} in $1990$. We conclude by citing the following results from \cite{brimo1}.
\begin{lem} \label{lem:brimoresults}
\
\begin{enumerate}[leftmargin=*]
\item[\rm(1)] Both Eichler integrals $\mathcal{E}_{g_{k+1,D}}$ and $g_{k+1,D}^{*}$ may be defined on $E_D$.
\item[\rm(2)] The function $\mathcal{G}_{-k,D}$ is a locally harmonic Maass form of weight $-2k$ with continuously but not differentially removable singularities on $E_D$. 
\item[\rm(3)] If $\tau \in \Hb \setminus E_D$, then we have
\begin{align*}
\hspace*{\leftmargini} \mathcal{G}_{-k,D}(\tau) = \pi D^{k+\frac{1}{2}}c_{\infty}(k+1) - \frac{D^{k+\frac{1}{2}}(2k)!}{(4\pi)^{2k+1}}\mathcal{E}_{g_{k+1,D}}(\tau) + D^{k+\frac{1}{2}} g_{k+1,D}^{*}(\tau).
\end{align*}
In particular, we have
\begin{align*}
\mathcal{D}^{2k+1}\left(\mathcal{G}_{-k,D}(\tau)\right) &= - \frac{D^{k+\frac{1}{2}}(2k)!}{(4\pi)^{2k+1}}g_{k+1,D}(\tau), \\ 
\xi_{-2k}\left(\mathcal{G}_{-k,D}(\tau)\right) &= D^{k+\frac{1}{2}}g_{k+1,D}(\tau).
\end{align*}
\end{enumerate}
\end{lem}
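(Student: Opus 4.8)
The plan is to establish the three assertions by adapting the analysis of $\mathcal F_{1-k,D}$ in \cite{BKK} to the ``even'' function $\mathcal G_{-k,D}$, the one structural difference being that the summands of $\mathcal G_{-k,D}$ carry no $\sgn(Q_\tau)$ factor. This single change is what renders $\mathcal G_{-k,D}$ continuous across $E_D$ and collapses the component-dependent local polynomial of Lemma \ref{L:LocPoly} to the global constant $\pi D^{k+\frac12}c_\infty(k+1)$. Concretely, I would proceed in four steps: verify the conditions of Definition \ref{defn:LHMF} to obtain (2); compute the images of $\mathcal G_{-k,D}$ under $\xi_{-2k}$ and $\mathcal D^{2k+1}$ to obtain the ``in particular'' part of (3) together with harmonicity; extract the holomorphic part to obtain the splitting in (3); and finally read off (1) from the resulting explicit shapes of $\mathcal E_{g_{k+1,D}}$ and $g_{k+1,D}^*$.

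The basic input is the elementary identity $\vt{Q(\tau,1)}^2 = v^2(D + Q_\tau^2)$, valid for every $Q=[a,b,c]\in\mathcal Q_D$, which gives
\[
\frac{Dv^2}{\vt{Q(\tau,1)}^2} = \frac{D}{D + Q_\tau^2} \in (0,1],
\]
the value $1$ being attained exactly on the geodesic $\{Q_\tau = 0\}$. Since $Q(\tau,1)^k$ is holomorphic and $Q_\tau$ is real-analytic on $\Hb$, each summand of $\mathcal G_{-k,D}$ is continuous on all of $\Hb$; hence $\mathcal G_{-k,D}$ extends continuously across $E_D$, its singularities are continuously removable, and condition (3) of Definition \ref{defn:LHMF} holds automatically. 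On the other hand, differentiating the incomplete beta function produces the factor $(1-x)^{-\frac12}$ with $1-x = Q_\tau^2/(D+Q_\tau^2)$, hence a factor $\vt{Q_\tau}^{-1}$ whose attached $\sgn(Q_\tau)$ jumps across $E_D$; this is why the singularities are not differentially removable.

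I would next compute $\xi_{-2k}(\mathcal G_{-k,D})$ term by term. Only the incomplete beta function depends on $\overline\tau$, and its $\overline\tau$-derivative is $x^{k-\frac12}(1-x)^{-\frac12}\,\partial_{\overline\tau}x$; by the identity above the factor $(1-x)^{-\frac12}$ together with the $Q_\tau$ coming from $\partial_{\overline\tau}x$ combines into $\sgn(Q_\tau)$ times an algebraic expression that collapses each term to $\sgn(Q_\tau)Q(\tau,1)^{-(k+1)}$, up to the overall constant $D^{k+\frac12}$. This gives $\xi_{-2k}(\mathcal G_{-k,D}) = D^{k+\frac12}g_{k+1,D}$. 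Harmonicity on $\Hb\setminus E_D$ (condition (2)) then follows from the factorization $\Delta_{-2k} = -\xi_{2k+2}\circ\xi_{-2k}$ and the fact that $g_{k+1,D}$ is holomorphic on each connected component, so that $\xi_{2k+2}(g_{k+1,D})=0$ there.

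The main obstacle is the Bol image and, with it, the splitting. Here I would compute the Fourier expansion of $\mathcal G_{-k,D}$ by organizing $\sum_{Q\in\mathcal Q_D}$ according to the leading coefficient $a$ and unfolding against $\Gamma_\infty$, using the integral representation of $\beta$ exactly as in \cite{BKK}. The nonconstant Fourier coefficients assemble into the holomorphic Eichler integral, producing $-\tfrac{D^{k+\frac12}(2k)!}{(4\pi)^{2k+1}}\mathcal E_{g_{k+1,D}}$, the constant term (coming from the $\Gamma_\infty$-orbit sums) produces $\pi D^{k+\frac12}c_\infty(k+1)$, which may be evaluated via \cite[Proposition 3]{zagier77}, and the non-holomorphic part is $D^{k+\frac12}g_{k+1,D}^*$ by the previous step; the polynomial growth of condition (4) drops out of the same expansion. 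The decisive point is that, because the summands carry no $\sgn(Q_\tau)$, no component-dependent defect sum of the type $\sum_{a<0<Q_\tau}Q(\tau,1)^{k-1}$ appearing in Lemma \ref{L:LocPoly} survives, so the correction is a single constant, and the continuity from the second step forces this constant to be the same on every component. Granting the splitting, applying $\mathcal D^{2k+1}$ and $\xi_{-2k}$ and using the relations $\mathcal D^{2k+1}(\mathcal E_{g_{k+1,D}})=g_{k+1,D}$, $\mathcal D^{2k+1}(g_{k+1,D}^*)=0$, $\xi_{-2k}(\mathcal E_{g_{k+1,D}})=0$, $\xi_{-2k}(g_{k+1,D}^*)=g_{k+1,D}$ (the weight $2k+2$ analogues of those recorded before Lemma \ref{L:LocPoly}) yields the two displayed images. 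Finally, (1) is immediate from these shapes: the $q$-series $\mathcal E_{g_{k+1,D}}$ converges on all of $\Hb$, and the integral defining $g_{k+1,D}^*$ has bounded integrand with discontinuities confined to the measure-zero set $E_D$, so both extend continuously there. The step most likely to demand real care is the unfolding, where one must verify that the $a\neq0$ contribution is precisely the stated Eichler integral and that the orbit sums reproduce $c_\infty(k+1)$.
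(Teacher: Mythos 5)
Your plan is sound, but note the context: the paper does not actually prove this lemma --- its ``proof'' consists entirely of citations to \cite{brimo1} (the remark after Proposition 4.4 there for (1), Theorem 1.3 for (2) and (3), and Proposition 5.2 for the two displayed images). What you have written is essentially a reconstruction of the arguments of that cited paper, which in turn adapt the method of \cite{BKK}: the identity $\vt{Q(\tau,1)}^2 = v^2\left(D+Q_\tau^2\right)$, the term-by-term $\xi$-computation in which $(1-x)^{-\frac12}$ combines with $\partial_{\overline\tau}x$ to produce $\sgn(Q_\tau)$, harmonicity via $\Delta_{-2k}=-\xi_{2k+2}\circ\xi_{-2k}$, and the unfolding of the sum over $\mathcal Q_D$ along $\Gamma_\infty$-orbits to produce the Eichler integral and the constant are exactly the steps carried out there. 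One genuinely nice organizational point in your version is how you obtain the global constant: rather than tracking the correction term component by component, you observe that $\mathcal G_{-k,D}$, $\mathcal E_{g_{k+1,D}}$, and $g_{k+1,D}^*$ all extend continuously across the measure-zero set $E_D$, so the locally constant correction is continuous on the connected set $\Hb$ and hence a single constant --- which cleanly explains why no analogue of the component-dependent local polynomial $P_{\mathcal C}$ of Lemma \ref{L:LocPoly} survives. Three caveats. First, this constancy argument invokes your part (1), so (1) must be established before (3); this is harmless since your proof of (1) (polynomially bounded Fourier coefficients of $g_{k+1,D}$ for the $q$-series, locally bounded integrand with jumps only on a measure-zero set for $g_{k+1,D}^*$) is independent of the rest. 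Second, your continuity claim for $\mathcal G_{-k,D}$ needs the locally uniform convergence of the defining series, which follows from $\beta\left(x;k+\frac12,\frac12\right)\ll x^{k+\frac12}$ as $x\to0$, giving summands of size $\ll v^{2k+1}\vt{Q(\tau,1)}^{-k-1}$; you should record this. Third, the unfolding computation --- which must reproduce exactly $\pi D^{k+\frac12}c_\infty(k+1)$ and the factor $\frac{(2k)!}{(4\pi)^{2k+1}}$ in front of $\mathcal E_{g_{k+1,D}}$ --- is flagged but not executed, so your write-up is a correct plan matching the cited source's proof rather than a self-contained argument.
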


\begin{proof}
The first part is the remark after \cite[Proposition 4.4]{brimo1}. The second part is \cite[Theorem 1.3 (1)]{brimo1}. The third part is \cite[Theorem 1.3 (2)]{brimo1} and \cite[Proposition 5.2 (1) and (2)]{brimo1}.
\end{proof}

\section{Proof of Theorem \ref{T:Main} and Corollary \ref{cor:Maincor}}\label{ProofMain}
In this section we prove our main result and our corollary.
	\begin{proof}[Proof of Theorem \ref{T:Main}]
		We use Lemma \ref{L:LocPoly} and define the real-analytic function
		\begin{equation*}
			\mathbb F \coloneqq D^{\frac12-k} f_{k,D}^* - D^{\frac12-k} \frac{(2k-2)!}{(4\pi)^{2k-1}} \mathcal E_{f_{k,D}}.
		\end{equation*}
		Using \eqref{E:XiFlip}, \eqref{E:DFlip}, and \eqref{E:DF} gives that
		\begin{equation*}
			\xi_{2-2k}(\mathfrak F_{2-2k}(\mathbb F)) = -\xi_{2-2k}(\mathbb F), \quad \mathcal{D}^{2k-1}(\mathfrak F_{2-2k}(\mathbb F)) = -\mathcal{D}^{2k-1}(\mathbb F).
		\end{equation*}
		Since both terms of $\mathbb F$ decay towards $i\infty$ ($f_{k,D}$ is a cusp form), this shows that
		\begin{equation*}
			\mathfrak F_{2-2k}(\mathbb F) = -\mathbb F.
		\end{equation*}

		So we are left to prove that $\mathfrak F_{2-2k}(P_{\mathcal{C}})=-P_{\mathcal{C}}$. By definition of $P_{\mathcal{C}}$ in \eqref{eq:localPolydef}, it suffices to show that
		\begin{align}
		\mathfrak F_{2-2k}(1) &= -1,\label{eq:claimPart1}\\
		\mathfrak F_{2-2k} (p) &= -p,\label{eq:claimPart2}
		\end{align}
		where
		\begin{equation*}
			p(\tau) \coloneqq (-1)^k 2^{3-2k} D^{\frac12-k} \sum_{\substack{Q=[a,b,c]\in\mathcal Q_D\\a<0<Q_\tau}} Q(\tau,1)^{k-1}.
		\end{equation*}

We start by proving \eqref{eq:claimPart1}, which is equivalent to showing that
\begin{equation*}
R_{2-2k}^{2k-2}(1) = (2k-2)! v^{2-2k}.
\end{equation*}
This follows directly using \eqref{E:RD}.

We next prove \eqref{eq:claimPart2}. Since $\tau \not\in E_D$ and $D$ a non-square, we may rewrite
\begin{align*}
p(\tau) = (-1)^k 2^{1-2k} D^{\frac12-k} \sum_{Q=[a,b,c]\in\mathcal Q_D} \left(1-\sgn(a)\right)\left(\sgn\left(Q_\tau\right)+1\right)Q(\tau,1)^{k-1}.
\end{align*}
Next observe that
\begin{equation*}
R_{\kappa} \left(\left(\sgn\left(Q_\tau\right)+1\right)Q(\tau,1)^{k-1}\right) = \left(\sgn\left(Q_\tau\right)+1\right) R_{\kappa} \left(Q(\tau,1)^{k-1}\right)
\end{equation*}
for $\kappa \in \Z$, because $\frac{d}{d\tau} (\sgn(Q_\tau)+1) = 0$. Thus, \eqref{eq:claimPart2} follows if we show that
\begin{align*}
\mathfrak F_{2-2k}\!\left(Q(\tau,1)^{k-1}\right) = -Q(\tau,1)^{k-1}.
\end{align*}
This is equivalent to
\begin{align} \label{eq:claimPart2version2}
R_{2-2k}^{2k-2}\!\left(Q(\tau,1)^{k-1}\right) = \frac{(2k-2)!}{v^{2k-2}} Q(\overline\tau,1)^{k-1}.
\end{align}
To prove this  note that there exists $A\in\SL_2(\R)$ (see \cite[Lemma 3.1]{BKK}) such that
\begin{equation} \label{eq:Qslash}
	\tau \vert_{-2} A = -\frac{Q(\tau,1)}{\sqrt D}.
\end{equation}
By the definition of the slash operator, \eqref{eq:Qslash} is equivalent to
\begin{align*}
\tau^{k-1} \big|_{2-2k} A = \frac{(-1)^{k+1}}{D^\frac{k-1}2} Q(\tau,1)^{k-1}.
\end{align*}
Using \eqref{eq:maassraisingslashing} repeatedly, we infer
		\begin{equation*}
			R_{2-2k}^{2k-2}\left(Q(\tau,1)^{k-1}\right) = (-1)^{k+1} D^\frac{k-1}2 R_{2-2k}^{2k-2}\left(\tau^{k-1}\right)\Big|_{2k-2}A.
		\end{equation*}
To prove \eqref{eq:claimPart2version2}, and hence \eqref{eq:claimPart2} as well, we next claim that it suffices to verify
		\begin{equation}\label{E:RaiseTau}
			R_{2-2k}^{2k-2}\left(\tau^{k-1}\right) = (2k-2)! \left(\frac{\overline\tau}{v^2}\right)^{k-1}.
		\end{equation}
		Indeed, assuming \eqref{E:RaiseTau} gives
		\begin{equation*}
			R_{2-2k}^{2k-2}\left(Q(\tau,1)^{k-1}\right) = (-1)^{k+1} D^\frac{k-1}2 (2k-2)! \left(\frac{\overline\tau^{k-1}}{v^{2k-2}}\right) \bigg|_{2k-2} A.
		\end{equation*}
		Rewriting yields
		\begin{equation*}
			\left(\frac{\overline\tau^{k-1}}{v^{2k-2}}\right) \bigg|_{2k-2} A = \frac{(-1)^{k+1}}{D^\frac{k-1}2} v^{2-2k} Q(\overline\tau,1)^{k-1}.
		\end{equation*}
		This directly gives \eqref{eq:claimPart2version2}.

		Hence, we are left to prove \eqref{E:RaiseTau}. By \eqref{E:RD}, we have
		\begin{equation*}
			R_{2-2k}^{2k-2}\left(\tau^{k-1}\right) = \sum_{r=0}^{2k-2} (-1)^r \binom{2k-2}r (2-2k+r)_{2k-2-r} v^{r-2k+2} (4\pi)^r \mathcal D^r\left(\tau^{k-1}\right).
		\end{equation*}
		If $r>k-1$, then $\mathcal D^r(\tau^{k-1}) = 0$. If $r\le k-1$, then
		\begin{equation*}
			\mathcal D^r\left(\tau^{k-1}\right) = \frac1{(2\pi i)^r} \frac{(k-1)!}{(k-r-1)!} \tau^{k-1-r}.
		\end{equation*}
		Moreover
		\begin{equation*}
			(2-2k+r)_{2k-2-r} = (-1)^r (2k-r-2)!.
		\end{equation*}
		 We then verify \eqref{E:RaiseTau} by obtaining
		\begin{align*}
			R_{2-2k}^{2k-2}\!\left(\tau^{k-1}\right) &= (2k\!-\!2)! \!\left(\frac{\tau}{v^2}\right)^{\!k-1} \sum_{r=0}^{k-1} \frac{(k\!-\!1)!}{r!(k\!-\!1\!-\!r)!} \!\left(\frac{-2iv}\tau\right)^{\!r}
			= (2k\!-\!2)! \!\left(\frac{\overline\tau}{v^2}\right)^{\!k-1}.
		\end{align*}
This completes the proof.
	\end{proof}

It remains to prove Corollary \ref{cor:Maincor}.
\begin{proof}[Proof of Corollary \ref{cor:Maincor}]
Let $\tau \in \Hb \setminus E_D$ and define
\begin{align*}
\mathbb{G}(\tau) \coloneqq D^{k+\frac{1}{2}} g_{k+1,D}^{*}(\tau) - \frac{D^{k+\frac{1}{2}}(2k)!}{(4\pi)^{2k+1}}\mathcal{E}_{g_{k+1,D}}(\tau).
\end{align*}
Using \eqref{E:XiFlip}, \eqref{E:DFlip}, and Lemma \ref{lem:brimoresults} (3) gives that
		\begin{equation*}
			\xi_{-2k}\left(\mathfrak F_{-2k}(\mathbb G(\tau))\right) = -\xi_{-2k}(\mathbb G(\tau)), \quad \mathcal{D}^{2k+1}(\mathfrak F_{-2k}(\mathbb G(\tau))) = -\mathcal{D}^{2k+1}(\mathbb G(\tau)).
		\end{equation*}
According to \cite[Theorem 1.1 (ii)]{mo1} (or \cite[Proposition 4.1]{brimo1}), $g_{k+1,D}$ decays towards $i\infty$ like a cusp form. Thus, both terms in $\mathbb G$ decay towards $i\infty$ as well. Hence, we infer that
		\begin{equation*}
			\mathfrak F_{-2k}(\mathbb G(\tau))  = -\mathbb G(\tau) .
		\end{equation*}
Letting $k \mapsto k+1$ in \eqref{eq:claimPart1} and using the splitting in Lemma \ref{lem:brimoresults} (3) completes the proof.
\end{proof}

\end{document}